\newtheorem{thm}{Theorem}[section]
\newtheorem{prop}[thm]{Proposition}
\theoremstyle{definition}
\newtheorem{defn}[thm]{Definition}
\theoremstyle{remark}
\newcommand{\grad}{\mathrm{grad}}
\newcommand{\id}{\mathrm{id}}
\title{Four-dimensional Riemannian manifolds with two circulant structures}
\author{Dimitar Razpopov}
\date{}
\begin{document}

\maketitle

\begin{abstract}
We consider a class $(M, g, q)$ of four-dimensional Riemannian manifolds $M$, where beside the metric $g$ there is an additional structure $q$, whose fourth power is the unit matrix. We use the existence of a local coordinate system such that there the coordinates of $g$ and $q$ are circulant matrices. In this system $q$ has constant coordinates and $q$ is an isometry with respect to $g$. By the special identity for the curvature tensor $R$ generated by the connection $\nabla$ of $g$ we define a subclass of $(M, g, q)$. For any $(M, g, q)$ in this subclass we get some assertions for the sectional curvatures of two-planes. We get the necessary and sufficient condition for $g$ such that $q$ is parallel with respect to $\nabla$.

\end{abstract}

\Small{\textbf{Mathematics Subject Classification (2010)}: 53C15,
53B20}

\Small{\textbf{Keywords}: Riemannian metric,
curvature properties, circulant matrix}
\thispagestyle{empty}
\section{Introduction}
The main purpose of the present paper is to continue the
considerations on some Riemannian manifolds using the existing of an useful local circulant coordinate system analogously to \cite{21}, \cite{3}, \cite{4}.

 In Sect.~\ref{1} we introduce four-dimensional differentiable manifold $M$ with a Riemannian metric $g$ whose matrix in local coordinates is a special circulant matrix. Furthermore, we consider an additional structure $q$ on $M$ with $q^{4}=\id$ such that its matrix in local coordinates is also circulant. Thus, the structure $q$ is an isometry with respect to $g$.
We denote by $(M, g, q)$ the manifold $M$ equipped with the metric $g$ and the structure $q$.
In Sect. \ref{2} in Theorem~\ref{th1} we obtain that an orthogonal basis of type $\{x, qx, q^{2}x, q^{3}x\}$ exists in the tangent space of a manifold $(M, g, q)$. In Sect.~\ref{3} we establish relations between the sectional curvatures of some special $2$-planes in the tangent space. In Sect.~\ref{4} we obtain a necessary and sufficient condition for $q$ to be parallel with respect to the Riemannian connection of $g$.

\section{Preliminaries}\label{1}
Let $M$ be a four-dimensional manifold with a Riemannian metric $g$. Let the local components of the metric $g$ at an arbitrary point $p(X^{1}, X^{2}, X^{3}, X^{4})\in M$ form the following circulant matrix:
 \begin{equation}\label{f1}
    (g_{ij})=\begin{pmatrix}
      A & B & C & B \\
      B & A & B & C\\
      C & B & A & B\\
      B & C & B & A\\
    \end{pmatrix},\qquad
\end{equation}
where $A=A(p), B=B(p), C=C(p)$ are smooth functions.

We suppose
\begin{equation}\label{ab}
  0 < B < C < A\ .
\end{equation}
 Then the conditions to be a positive definite metric $g$
are satisfied:
\begin{equation*}
    A>0,\quad \begin{vmatrix}
      A & B\\
      B & A \\
    \end{vmatrix}=(A-B)(A+B)>0,
\end{equation*}
\begin{equation*}
     \begin{vmatrix}
      A & B & C \\
      B & A & B \\
      C & B & A \\
    \end{vmatrix}=(A-C)\Big(A(C+A)-2B^{2}\Big)>0,
\end{equation*}
\begin{equation*}
     \begin{vmatrix}
     A & B & C & B \\
      B & A & B & C \\
      C & B & A & B\\
      B & C & B & A\\
    \end{vmatrix}= (A-C)^{2}\Big((A+C)^{2}-4B^{2}\Big)>0.
\end{equation*}

We denote by $(M, g)$ the manifold $M$ equipped with the Riemannian metric $g$ defined by (\ref{f1}) with conditions (\ref{ab}).

Let $q$ be an endomorphism in the tangent space $T_{p}M$ of the manifold $(M,g)$. We suppose the local coordinates of $q$ are given by the circulant matrix
\begin{equation}\label{f2}
    (q_{i}^{s})=\begin{pmatrix}
      0 & 1 & 0 & 0\\
      0 & 0 & 1 & 0\\
      0 & 0 & 0 & 1\\
      1 & 0 & 0 & 0\\
    \end{pmatrix}.
\end{equation}
Then $q$ satisfies
\begin{equation}\label{1.3}
    q^{4}=\id,\qquad q^{2}\neq \pm \id.
\end{equation}

The manifold $(M, g)$ equipped with the structure $q$, defined by (\ref{f2}), we denote by $(M, g, q)$.

Further, $x, y, z, u$ will stand for arbitrary elements of the algebra on the smooth vector fields on $M$ or vectors in the tangent space $T_{p}M$. The Einstein summation convention is used, the range of the summation indices being always $\{1, 2, 3, 4\}$.

From (\ref{f1}) and (\ref{f2}) we get immediately the following
\begin{thm} The structure $q$ of the manifold $(M, g, q)$ is an isometry with respect to the metric $g$, i.e.
\begin{equation}\label{2.2}
    g(qx, qy)=g(x, y).
\end{equation}
\end{thm}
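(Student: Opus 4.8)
The plan is to verify the isometry property directly in the given local coordinate system, where both $g$ and $q$ have explicit matrix representations. Writing $x = x^{i}\partial_{i}$ and $y = y^{j}\partial_{j}$ in coordinates, the statement $g(qx, qy) = g(x, y)$ is equivalent to the matrix identity $q^{T} g\, q = g$, where $g = (g_{ij})$ is the circulant matrix in (\ref{f1}) and $q = (q^{s}_{i})$ is the permutation matrix in (\ref{f2}). Thus the entire claim reduces to checking a single $4\times 4$ matrix equation, so the proof is computational rather than conceptual.

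First I would compute the action of $q$ on the coordinate vectors. Since $q$ is the cyclic permutation matrix sending $\partial_{1}\mapsto\partial_{4}$, $\partial_{2}\mapsto\partial_{1}$, $\partial_{3}\mapsto\partial_{2}$, $\partial_{4}\mapsto\partial_{3}$ (reading columns of (\ref{f2})), applying $q$ amounts to a cyclic shift of the index. Then I would express $g(qx,qy)$ in coordinates as $g_{ij}(q^{i}_{k}x^{k})(q^{j}_{l}y^{l}) = (q^{i}_{k}g_{ij}q^{j}_{l})x^{k}y^{l}$ and verify that the bracketed quantity equals $g_{kl}$. Concretely, this is the assertion that conjugating the circulant metric matrix by the cyclic permutation matrix leaves it unchanged.

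The key structural fact making this work is that both matrices are circulant with respect to the same cyclic ordering. A cyclic shift permutes the four generating entries $A,B,C,B$ of each row of $g$ while simultaneously permuting the rows and columns, and because the pattern $(A,B,C,B)$ is itself invariant under the appropriate cyclic relabeling induced by $q$, the product $q^{T}gq$ reproduces $g$ entry by entry. I would display the resulting matrix after conjugation and observe that it coincides with (\ref{f1}). This is the step where the specific symmetric placement of the entries (with $B$ appearing in both off-diagonal cyclic positions) is essential; had the circulant pattern been less symmetric, the identity would fail.

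The main obstacle, though minor, is purely bookkeeping: one must track the index shifts carefully so that each entry of $q^{T}gq$ is matched against the correct entry of $g$, since an off-by-one error in the permutation would produce a spurious mismatch. Once the indices are aligned correctly, the verification is immediate and requires no appeal to the smoothness of $A,B,C$ or to the positive-definiteness conditions (\ref{ab})—those hypotheses play no role here, as the isometry identity (\ref{2.2}) is a pointwise algebraic consequence of the forms (\ref{f1}) and (\ref{f2}) alone.
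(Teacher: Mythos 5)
Your proof is correct and is essentially the verification the paper has in mind: the paper simply asserts that \eqref{2.2} follows ``immediately'' from \eqref{f1} and \eqref{f2}, and your reduction to the matrix identity $q^{T}gq=g$, checked via the circulant structure, supplies exactly the omitted computation. One small correction to your commentary: $q^{T}gq=g$ holds for \emph{any} circulant matrix (a circulant matrix is a polynomial in the cyclic shift $q$, hence commutes with it, and $q^{T}=q^{-1}=q^{3}$), so the symmetric placement of $B$ is needed only to make $g$ a symmetric bilinear form, not for the isometry identity itself.
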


\section{Orthogonal $q$-bases of $T_{p}M$}\label{2}
\begin{defn}
A basis of type $\{x, qx, q^{2}x, q^{3}x\}$ of $T_{p}M$ is called a $q$-\textit{basis}. In this case we say that \textit{the vector $x$ induces a $q$-basis of} $T_{p}M$.
\end{defn}

Obviously, we have the following
\begin{prop}
A vector $x=(x^{1},x^{2},x^{3}, x^{4})$ induces a $q$-basis of $T_{p}M$ if and only if
   \begin{equation}\label{lema2}
   \Big((x^{1}-x^{3})^{2}+(x^{2}-x^{4})^{2}\Big)\Big((x^{1}+x^{3})^{2}-(x^{2}+x^{4})^{2}\Big)\neq 0
   \end{equation}
\end{prop}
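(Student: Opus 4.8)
The plan is to translate the statement into a single linear-independence question and then reduce it to the non-vanishing of one determinant. First I would use the explicit matrix (\ref{f2}) to record the action of $q$: since $q$ shifts the coordinates cyclically, a vector $x=(x^{1},x^{2},x^{3},x^{4})$ is sent to $qx=(x^{2},x^{3},x^{4},x^{1})$, $q^{2}x=(x^{3},x^{4},x^{1},x^{2})$ and $q^{3}x=(x^{4},x^{1},x^{2},x^{3})$. By the definition of a $q$-basis, $x$ induces a $q$-basis precisely when $\{x,qx,q^{2}x,q^{3}x\}$ is linearly independent; as these are four vectors in the four-dimensional space $T_{p}M$, this happens if and only if the $4\times4$ matrix $L$ having them as its rows is nonsingular, i.e. $\det L\neq 0$. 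Hence the whole proposition reduces to showing that $\det L$ agrees, up to a nonzero constant factor, with the left-hand side of (\ref{lema2}).

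The second and main step is the computation of $\det L$. The matrix $L$ is a (left-)circulant built from the coordinates of $x$, and I would avoid a blind cofactor expansion by exploiting its symmetry. Concretely, I would pass from the rows $R_{1},R_{2},R_{3},R_{4}$ to the combinations $R_{1}+R_{3},\,R_{2}+R_{4},\,R_{1}-R_{3},\,R_{2}-R_{4}$ and, symmetrically, from the columns $C_{1},\dots,C_{4}$ to $C_{1}+C_{3},\,C_{2}+C_{4},\,C_{1}-C_{3},\,C_{2}-C_{4}$. Setting $a=x^{1}+x^{3}$, $b=x^{2}+x^{4}$, $c=x^{1}-x^{3}$, $d=x^{2}-x^{4}$, these operations decouple the two ``even'' directions from the two ``odd'' directions and bring $L$, up to an explicit nonzero numerical factor coming from the combinations, into block-diagonal form with blocks $\left(\begin{smallmatrix} a & b\\ b & a\end{smallmatrix}\right)$ and $\left(\begin{smallmatrix} c & d\\ d & -c\end{smallmatrix}\right)$. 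Its determinant is then proportional to $(a^{2}-b^{2})\,(-c^{2}-d^{2})$, which is exactly, up to sign, $\big((x^{1}+x^{3})^{2}-(x^{2}+x^{4})^{2}\big)\big((x^{1}-x^{3})^{2}+(x^{2}-x^{4})^{2}\big)$, i.e. the left-hand side of (\ref{lema2}). Since this expression is zero if and only if $\det L=0$, the vectors form a basis exactly when (\ref{lema2}) holds, which finishes the proof.

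As an alternative to the block reduction, one may simply recognize $L$ as a left-circulant and invoke the circulant determinant formula with the fourth root of unity $\omega=i$: the eigenvalues of the associated circulant are $x^{1}+\omega^{j}x^{2}+\omega^{2j}x^{3}+\omega^{3j}x^{4}$ for $j=0,1,2,3$, and their product pairs up (via $j=0,2$ and the conjugate pair $j=1,3$) into precisely $\big((x^{1}+x^{3})^{2}-(x^{2}+x^{4})^{2}\big)$ and $\big((x^{1}-x^{3})^{2}+(x^{2}-x^{4})^{2}\big)$. I expect the only delicate point in either route to be routine bookkeeping rather than a genuine obstacle: keeping track of the overall nonzero constant introduced by the row and column combinations (respectively the harmless sign change incurred when passing from the circulant to the left-circulant), none of which affects the vanishing condition that is all the statement requires.
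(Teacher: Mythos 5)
Your proposal is correct and follows essentially the same route as the paper: form the matrix whose rows are $x,qx,q^{2}x,q^{3}x$ and observe that its determinant is, up to sign, exactly the left-hand side of (\ref{lema2}), so linear independence is equivalent to that expression being nonzero. The paper simply asserts the value of this determinant without computation, while you supply the verification (via the block reduction or the circulant eigenvalue factorization), both of which check out.
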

\begin{proof}
If $x=(x^{1},x^{2},x^{3}, x^{4})\in T_{p}M$, then $qx=(x^{2},x^{3}, x^{4}, x^{1})$,\\ $q^{2}x=(x^{3},x^{4},x^{1}, x^{2})$, $q^{3}x=(x^{4},x^{1},x^{2}, x^{3})$.
The determinant of coordinates of the vectors $x, qx, q^{2}x, q^{3}x$ is just the left side of \eqref{lema2}.
  The vectors $x, qx, q^{2}x, q^{3}x$ are linearly independent which imply \eqref{lema2}.
\end{proof}

\begin{thm}\label{t3.1}
If $x=(x^{1},x^{2},x^{3}, x^{4})$ induces a $q$-basis of $T_{p}M$, then for the angles
 $\angle(x,qx),\ \angle(x,q^{2}x)$, $\angle(qx,q^{2}x)$, $\angle(qx,q^{3}x)$, $\angle(x,q^{3}x)$ and $\angle(q^{2}x,q^{3}x)$
 we have
 \begin{equation*}
   \angle(x,qx)=\angle(qx,q^{2}x)=\angle(x,q^{3}x)=\angle(q^{2}x,q^{3}x),\quad \angle(x,q^{2}x)=\angle(qx,q^{3}x).
\end{equation*}
 \end{thm}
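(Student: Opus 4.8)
The plan is to reduce every angle to the corresponding inner product and then exploit that $q$ is a $g$-isometry (equation \eqref{2.2}) together with the symmetry of $g$. Recall that for nonzero vectors $u,v$ the angle is determined by $\cos\angle(u,v)=g(u,v)\big/\sqrt{g(u,u)}\sqrt{g(v,v)}$. My first step is to note that, since $q$ preserves $g$, we have $g(q^{i}x,q^{i}x)=g(x,x)$ for every $i$, so all four vectors $x,qx,q^{2}x,q^{3}x$ have the same length $\ell=\sqrt{g(x,x)}$. Hence $\cos\angle(q^{i}x,q^{j}x)=g(q^{i}x,q^{j}x)/\ell^{2}$, and comparing two angles among these vectors reduces to comparing the two numerators $g(q^{i}x,q^{j}x)$.

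The second step is to show that such a numerator depends only on the index difference $j-i$ modulo $4$. Since $q$ is an isometry and $q^{4}=\id$, applying $q^{-1}=q^{3}$ repeatedly gives $g(q^{i}x,q^{j}x)=g(x,q^{\,j-i}x)$. Thus only three quantities can occur: $g(x,qx)$, $g(x,q^{2}x)$ and $g(x,q^{3}x)$. The key simplification is the identity $g(x,q^{3}x)=g(x,qx)$: writing $q^{3}=q^{-1}$ and using the symmetry of $g$ followed by one application of \eqref{2.2}, one gets $g(x,q^{3}x)=g(x,q^{-1}x)=g(q^{-1}x,x)=g(x,qx)$. Consequently a numerator equals $g(x,qx)$ whenever $j-i\equiv\pm1\pmod4$, and equals $g(x,q^{2}x)$ whenever $j-i\equiv2\pmod4$.

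Finally I would assemble the six angles according to their index differences. The pairs $(x,qx)$, $(qx,q^{2}x)$, $(q^{2}x,q^{3}x)$ have difference $1$ and the pair $(x,q^{3}x)$ has difference $3$; by the previous step all four share the numerator $g(x,qx)$ and the common length $\ell$, so
\[
\angle(x,qx)=\angle(qx,q^{2}x)=\angle(x,q^{3}x)=\angle(q^{2}x,q^{3}x).
\]
The pairs $(x,q^{2}x)$ and $(qx,q^{3}x)$ both have difference $2$, hence share the numerator $g(x,q^{2}x)$, giving $\angle(x,q^{2}x)=\angle(qx,q^{3}x)$. This is exactly the asserted conclusion.

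I do not expect a serious obstacle here; the argument is essentially formal once the isometry property is invoked. The only mildly delicate point is the identity $g(x,q^{3}x)=g(x,qx)$, which genuinely requires both the symmetry of $g$ and the isometry property rather than the isometry alone. As an alternative to the isometry argument, one could instead compute all the inner products $g(q^{i}x,q^{j}x)$ directly from the circulant matrices \eqref{f1} and \eqref{f2}, but the invariance argument above is cleaner and sidesteps that computation.
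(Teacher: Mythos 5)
Your proposal is correct and follows essentially the same route as the paper: both reduce each angle to $\cos\angle(u,v)=g(u,v)/\sqrt{g(u,u)}\sqrt{g(v,v)}$ and then use the isometry property \eqref{2.2} together with $q^{4}=\id$ to identify the relevant inner products. Your explicit handling of $g(x,q^{3}x)=g(x,qx)$ is just a slightly more spelled-out version of the paper's chain $g(q^{3}x,q^{3}y)=\dots=g(x,y)$.
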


\begin{proof}

Evidently from (\ref{2.2}) we have $g(q^{3}x, q^{3}y)=g(q^{2}x, q^{2}y)=g(qx, qy)=g(x, y)$.
Then from the well known formula $$\cos\angle (x, y) =\dfrac{g(x, y)}{\sqrt{g(x, x)}\sqrt{g(y, y)}}$$ we get $\cos\angle(x,qx)=\cos\angle(qx,q^{2}x)=\cos\angle(x,q^{3}x)=\cos\angle(q^{2}x,q^{3}x)$ and \\ $\cos\angle(x,q^{2}x)=\cos\angle(qx,q^{3}x)$.
\end{proof}

\begin{thm}\label{th1}
Let $x$ induce a $q$-basis in $T_{p}M$ of a manifold $(M, g, q)$. Then there exists an orthogonal $q$-basis $\{x, qx, q^{2}x, q^{3}x\}$ in $T_{p}M$.
\end{thm}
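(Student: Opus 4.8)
The plan is to reduce the assertion to the vanishing of two inner products and then to solve the resulting algebraic system explicitly. First I would record the Gram matrix of the four vectors $\{x, qx, q^{2}x, q^{3}x\}$. By the isometry property \eqref{2.2} one has $g(q^{k}x, q^{l}x) = g(x, q^{l-k}x)$, and Theorem~\ref{t3.1} shows that only three distinct entries occur: $d = g(x,x)$ on the diagonal, $a = g(x,qx) = g(qx,q^{2}x) = g(q^{2}x,q^{3}x) = g(x,q^{3}x)$, and $b = g(x,q^{2}x) = g(qx,q^{3}x)$. Hence the Gram matrix is circulant with first row $(d, a, b, a)$, and the basis is orthogonal if and only if $a = 0$ and $b = 0$. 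It therefore suffices to exhibit one vector $x$ satisfying the nondegeneracy condition \eqref{lema2} together with the two equations $g(x,qx)=0$ and $g(x,q^{2}x)=0$.

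Next I would compute $a$ and $b$ directly from \eqref{f1} and \eqref{f2}. Introducing $p = x^{1} + x^{3}$, $s = x^{2} + x^{4}$, $u = x^{1} - x^{3}$, $v = x^{2} - x^{4}$, a straightforward expansion gives
\[ g(x,qx) = B(p^{2} + s^{2}) + (A+C)\,ps, \]
\[ g(x,q^{2}x) = \tfrac{A+C}{2}(p^{2} + s^{2}) + 2B\,ps + \tfrac{C-A}{2}(u^{2} + v^{2}). \]
In these variables the condition \eqref{lema2} reads $(u^{2} + v^{2})(p^{2} - s^{2}) \neq 0$, so throughout I must keep $u^{2} + v^{2} \neq 0$ and $p^{2} \neq s^{2}$.

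Then I would solve the system. Writing $g(x,qx) = 0$ as $B t^{2} + (A+C)t + B = 0$ with $t = p/s$, the discriminant $(A+C)^{2} - 4B^{2}$ is positive because $0 < B < C < A$ forces $A + C > 2B$; the two roots are real, negative and have product $1$, so in particular $t \neq \pm 1$, and automatically $p^{2} \neq s^{2}$. Fixing $p, s$ (both nonzero) with this ratio, I would substitute the relation $(A+C)ps = -B(p^{2} + s^{2})$ into $g(x,q^{2}x) = 0$ and solve for $u^{2} + v^{2}$, obtaining
\[ u^{2} + v^{2} = \frac{(p^{2} + s^{2})\big((A+C)^{2} - 4B^{2}\big)}{(A+C)(A-C)}. \]
Choosing for instance $v = 0$ and $u$ equal to the positive square root then produces the desired vector.

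The main obstacle is the final sign check: one must verify that the right-hand side above is strictly positive, i.e.\ that the prescribed value of $u^{2} + v^{2}$ is actually attainable by real $u, v$. This is exactly where the full strength of the hypothesis $0 < B < C < A$ enters, since it guarantees simultaneously $(A+C)^{2} - 4B^{2} > 0$ and $A - C > 0$, together with $p^{2} + s^{2} > 0$ once $p, s$ are nonzero. Granting this, $u^{2} + v^{2} > 0$, condition \eqref{lema2} holds, and by the reduction of the first step the constructed $x$ induces an orthogonal $q$-basis.
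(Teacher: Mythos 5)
Your proof is correct, but it takes a genuinely different route from the paper's. The paper argues synthetically: it regards $\{x,qx,q^{2}x\}$ as spanning a pyramid with isosceles faces, applies the Cosine Rule to the face with apex angle $\angle(x-qx,\,q^{2}x-qx)$ to derive the constraint $4\cos\varphi-\cos\theta<3$ on $\varphi=\angle(x,qx)$ and $\theta=\angle(x,q^{2}x)$, and then merely observes that $\varphi=\theta=\tfrac{\pi}{2}$ is compatible with this inequality. You instead work in coordinates: you reduce orthogonality of the whole basis to the two equations $g(x,qx)=g(x,q^{2}x)=0$ via the circulant Gram matrix with first row $(d,a,b,a)$, compute both inner products explicitly from \eqref{f1} and \eqref{f2} in the variables $p,s,u,v$, and solve the resulting system, using $0<B<C<A$ to check that the quadratic in $t=p/s$ has real roots with $t\neq\pm 1$ and that the prescribed value of $u^{2}+v^{2}$ is positive, so that \eqref{lema2} holds. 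I verified your two displayed formulas for $g(x,qx)$ and $g(x,q^{2}x)$ and the final expression for $u^{2}+v^{2}$; they are correct. Your approach buys something the paper's does not: it is fully constructive and actually exhibits a vector inducing an orthogonal $q$-basis, whereas the paper's argument only shows that the orthogonal configuration of angles satisfies a necessary inequality, not that it is attained, so your version in fact closes a gap in the published proof. The price is a longer computation tied to the specific circulant form of $g$, where the paper's (incomplete) argument uses only the isometry property \eqref{2.2}.
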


\begin{proof}
Let $\{x, qx, q^{2}x, q^{3}x\}$ be a $q$-basis in $T_{p}M$ of a manifold $(M, g, q)$. Then the triples of vectors $\{x, qx, q^{2}x\}$; $\{x, qx, q^{3}x\}$; $\{x, q^{2}x, q^{3}x\}$; $\{qx, q^{2}x, q^{3}x\}$ form four congruent pyramids. We consider for example one of them formed by $\{x, qx, q^{2}x\}$. The first side of it is isosceles triangle with angles $\angle(x,qx)=\varphi$, $\dfrac{\pi-\varphi}{2}$, $\dfrac{\pi-\varphi}{2}$. The second side of it is isosceles triangle with angles $\angle(qx,q^{2}x)=\varphi$, $\dfrac{\pi-\varphi}{2}$, $\dfrac{\pi-\varphi}{2}$. The third side is isosceles triangle with angles $\angle(x,q^{2}x)=\theta$, $\dfrac{\pi-\theta}{2}$, $\dfrac{\pi-\theta}{2}$. The fourth side is isosceles triangle with angles $\angle(x-qx,q^{2}x-qx)=\phi$, $\dfrac{\pi-\phi}{2}$ and $\dfrac{\pi-\phi}{2}$.
From the Cosine Rule applied to the fourth side and from \eqref{2.2} we get
$$2g(x, x)(1-\cos\theta)=4g(x, x)(1-\cos\varphi)\cos\phi,$$
and then
$$\cos \phi=\frac{1-2\cos\varphi+cos\theta}{2(1-cos\varphi)}.$$
From the above and $-1<\cos\phi<1$ we find
$$4\cos \varphi - \cos \theta < 3.$$
 The angles $\varphi =\dfrac{\pi}{2}$, $\theta=\dfrac{\pi}{2}$ satisfy the above inequality. Having in mind Theorem~\ref{t3.1} we prove that there exists an orthogonal $q$-basis in $T_{p}M$.
\end{proof}

\section{Curvature properties of $(M, g, q)$}\label{3}
Let $\nabla$ be the Riemannian connection of $g$ for a manifold $(M, g, q)$.
Let $R$ be the curvature tensor field of $\nabla$ of type $(0,4)$, and $R$ satisfies the identity
\begin{equation}\label{3.1}
R(x, y, qz, qu)=R(x, y, z, u).
\end{equation}

We note, that by identities like (\ref{3.1}) in \cite{1}, \cite{2} it have been defined the subclass of almost complex manifolds with Norden metric and the subclass of almost Hermitian manifolds respectively.

The sectional curvature $\mu$ of $2$-plane $\{x,y\}$ from $T_{p}M$ is expressed by the formula \cite{5}
\begin{equation}\label{3.8}
\mu(x,y)=\frac{R(x,y,x,y)}{g(x,x)g(y,y)-g^{2}(x,y)}.
\end{equation}
\begin{thm}
Let $(M, g, q)$ be a manifold with property \eqref{3.1}. Let $x$ induce a $q$-basis in $T_{p}M$. Then for the sectional curvature $\mu$ of $2$-planes we have
\begin{equation}\label {3.2}
\mu(x,qx)=\mu(qx,q^{2}x)=\mu(q^{2}x,q^{3}x)=\mu(q^{3}x,x),
\end{equation}
\begin{equation}\label{3.3}
\mu(x,q^{2}x)=\mu(qx,q^{3}x)=0.
\end{equation}
\end{thm}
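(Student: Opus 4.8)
The plan is to reduce every sectional curvature appearing in \eqref{3.2} to a common numerator and a common denominator, and to force the numerator in \eqref{3.3} to vanish by an antisymmetry argument.

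First I would extract from \eqref{3.1} all the $q$-invariance I will need. The pair symmetry $R(x,y,z,u)=R(z,u,x,y)$ together with \eqref{3.1} gives the ``first-pair'' invariance $R(qx,qy,z,u)=R(x,y,z,u)$, and combining the two yields the full invariance $R(qx,qy,qz,qu)=R(x,y,z,u)$. Iterating this and using $q^{4}=\id$ gives $R(q^{k}x,q^{k}y,q^{k}z,q^{k}u)=R(x,y,z,u)$ for every $k$; applying \eqref{3.1} twice also yields $R(x,y,q^{2}z,q^{2}u)=R(x,y,z,u)$. On the metric side, the isometry property \eqref{2.2} gives $g(q^{k}x,q^{k}x)=g(x,x)$ and $g(q^{k}x,q^{k+1}x)=g(x,qx)$, $g(q^{k}x,q^{k+2}x)=g(x,q^{2}x)$ for all $k$, indices read modulo $4$.

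For \eqref{3.2} I would apply the full invariance to the numerators. Writing $q^{2}x=q(qx)$, $q^{3}x=q(q^{2}x)$ and $x=q(q^{3}x)$, each of the numerators $R(qx,q^{2}x,qx,q^{2}x)$, $R(q^{2}x,q^{3}x,q^{2}x,q^{3}x)$ and $R(q^{3}x,x,q^{3}x,x)$ collapses to $R(x,qx,x,qx)$ after stripping one, two, or three factors of $q$ and using $q^{4}=\id$ to close the cycle. By the metric identities above, every denominator equals $g(x,x)^{2}-g^{2}(x,qx)$. Hence the four quotients coincide, which is \eqref{3.2}.

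For \eqref{3.3} the equality $\mu(x,q^{2}x)=\mu(qx,q^{3}x)$ follows exactly as in the previous paragraph by peeling off one factor of $q$. The heart of the matter, and the step I expect to be the main obstacle, is the vanishing. I would set $y=q^{2}x$ and observe that $q^{2}y=q^{4}x=x$, so $q^{2}$ interchanges $x$ and $y$. Applying \eqref{3.1} twice in the last pair gives
$$R(x,y,x,y)=R(x,y,q^{2}x,q^{2}y)=R(x,y,y,x),$$
and the antisymmetry of $R$ in its last two arguments rewrites the right-hand side as $-R(x,y,x,y)$. Therefore $R(x,q^{2}x,x,q^{2}x)=0$. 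Since $x$ induces a $q$-basis, $x$ and $q^{2}x$ are linearly independent, so the denominator $g(x,x)g(q^{2}x,q^{2}x)-g^{2}(x,q^{2}x)$ is strictly positive; the quotient is thus genuinely $0$, giving \eqref{3.3}.
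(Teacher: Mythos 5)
Your proposal is correct and follows essentially the same route as the paper: iterate \eqref{3.1} together with the standard pair and skew symmetries of $R$ to identify all numerators in \eqref{3.2} with $R(x,qx,x,qx)$, and obtain \eqref{3.3} from $R(x,q^{2}x,x,q^{2}x)=R(x,q^{2}x,q^{2}x,x)=-R(x,q^{2}x,x,q^{2}x)$. You merely make explicit (the first-pair invariance, the antisymmetry step, and the nonvanishing of the denominator) what the paper leaves implicit in its identities \eqref{3.5}--\eqref{3.7}.
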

\begin{proof}
From (\ref{3.1}) we find
\begin{equation}\label{3.4}
R(x,y,z,u)=R(x,y,qz,qu)=R(x,y,q^{2}z,q^{2}u)=R(x,y,q^{3}z,q^{3}u).
\end{equation}
 In (\ref{3.4}) we substitute

  1)\ $u$ for $qx$, $y$ for $qx$ and $z$ for $x$;

   2)\ $z$ for $x$, $y$ for $q^{2}x$ and $u$ for $q^{2}x$;

   3)\ $z$ for $x$, $y$ for $q^{3}x$ and $u$ for $q^{3}x$\\
and obtain respectively
\begin{equation}\label{3.5}
R(x,qx,x,qx)=R(x,qx,qx,q^{2}x)=R(x,qx,q^{2}x,q^{3}x)=R(x,qx,q^{3}x,qx),
\end{equation}
\begin{equation}\label{3.6}
R(x,q^{2}x,x,q^{2}x)=R(x,q^{2}x,qx,q^{3}x)=R(x,q^{2}x,q^{2}x,x)=R(x,q^{2}x,q^{3}x,x),
\end{equation}
\begin{equation}\label{3.7}
R(x,q^{3}x,x,q^{3}x)=R(x,q^{3}x,qx,x)=R(x,q^{3}x,q^{2}x,x)=R(x,q^{3}x,q^{3}x,q^{2}x).
\end{equation}
Using (\ref{3.5}), (\ref{3.7}) and (\ref{3.8}) we get (\ref{3.2}) and using (\ref{3.6}) and (\ref{3.8}) we get (\ref{3.3}).
\end{proof}
We see that every $2$-plane $\{x,qx\} \in T_{p}M$ has only two $q$-bases $\{x,qx\}$ or $\{-x,-qx\}$.
So the sectional curvature $\mu$ of $\{x,qx\}$ is a function of the $\angle(x,qx)=\varphi$, i.e. $\mu(x,qx)=\mu(\varphi)$.
\begin{prop} Let $(M, g, q)$ be a manifold with property \eqref{3.1} and $u$ induce a $q$-basis in $T_{p}M$.
If $\{x,qx,q^{2}x,q^{3}x\}$ is an orthonormal $q$-basis in $T_{p}M$, then the sectional curvature satisfies
\begin{equation}\label{3.9}
\mu(\varphi)=\dfrac{1}{1-\cos^{2}\varphi}\mu(\dfrac{\pi}{2}),
\end{equation}
where $\varphi=\angle(u,qu)$.
\end{prop}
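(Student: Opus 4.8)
The plan is to run the whole computation inside the orthonormal $q$-basis itself. Put $e_{1}=x,\ e_{2}=qx,\ e_{3}=q^{2}x,\ e_{4}=q^{3}x$; in this frame $g_{ij}=\delta_{ij}$ and $q$ acts as the cyclic shift $e_{i}\mapsto e_{i+1}$ (indices mod $4$). First I would pin down the curvature components $R_{ijkl}=R(e_{i},e_{j},e_{k},e_{l})$. The hypothesis \eqref{3.1} says $R$ is unchanged when $q$ is applied to its last two slots; combining this with the pair symmetry $R(a,b,c,d)=R(c,d,a,b)$ shows $R$ is invariant when $q$ is applied to the first two slots as well. Hence each index-pair may be shifted cyclically and independently, so $R_{ijkl}$ depends only on the two gaps $j-i$ and $l-k\pmod 4$.

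This reduction determines the tensor in terms of $\mu(\tfrac{\pi}{2})$. A pair of gap $2$ is reversed by $q^{2}$, so any component containing such a pair equals its own negative and therefore vanishes (this reproves \eqref{3.3}). The remaining components are all equal to $\pm\mu(\tfrac{\pi}{2})$: every sectional value $R(e_{i},e_{i+1},e_{i},e_{i+1})$ equals $R_{1212}=\mu(\tfrac{\pi}{2})$, while the representative mixed value is $R(e_{1},e_{2},e_{1},e_{4})=-\mu(\tfrac{\pi}{2})$, obtained from \eqref{3.1} and the antisymmetry of $R$. Writing $u=\sum_{i}a_{i}e_{i}$, so that $qu=(a_{4},a_{1},a_{2},a_{3})$, I would then expand $R(u,qu,u,qu)=\sum_{i<j}\sum_{k<l}P_{ij}P_{kl}R_{ijkl}$, where $P_{ij}=a_{i}(qu)_{j}-a_{j}(qu)_{i}$ are the $2\times 2$ minors. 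Every term carrying a gap-$2$ minor ($P_{13}$ or $P_{24}$) drops out, and the surviving quadratic form in $P_{12},P_{23},P_{34},P_{14}$ turns out to be a perfect square:
\begin{equation*}
R(u,qu,u,qu)=\mu(\tfrac{\pi}{2})\,(P_{12}+P_{23}+P_{34}-P_{14})^{2}=\mu(\tfrac{\pi}{2})\bigl(g(u,u)-g(u,q^{2}u)\bigr)^{2}.
\end{equation*}

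Finally I would insert this into \eqref{3.8}. Since $q$ is an isometry, $g(u,u)=g(qu,qu)$ and $g(u,qu)=g(u,u)\cos\varphi$, so the denominator equals $g(u,u)^{2}(1-\cos^{2}\varphi)$. Using the orthogonality $g(u,q^{2}u)=0$, that is $\angle(u,q^{2}u)=\tfrac{\pi}{2}$, the numerator collapses to $\mu(\tfrac{\pi}{2})\,g(u,u)^{2}$, and dividing yields exactly \eqref{3.9}. The main obstacle is the bookkeeping in the middle paragraph: getting the reduced curvature components right (in particular the sign in $R(e_{1},e_{2},e_{1},e_{4})=-\mu(\tfrac{\pi}{2})$) and then recognizing the resulting quartic as the perfect square $\bigl(g(u,u)-g(u,q^{2}u)\bigr)^{2}$. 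I would also take care at the very last step, because the clean factor $1/(1-\cos^{2}\varphi)$ only survives after the term $g(u,q^{2}u)$ is annihilated by the orthogonality of $u$ and $q^{2}u$.
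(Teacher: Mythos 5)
Your frame computation is correct and in fact reproduces the paper's key identity \eqref{tr2}: with $u=\alpha x+\beta qx+\gamma q^{2}x+\delta q^{3}x$ one has $g(u,u)=\alpha^{2}+\beta^{2}+\gamma^{2}+\delta^{2}$ and $g(u,q^{2}u)=2(\alpha\gamma+\beta\delta)$, so your perfect square $\bigl(g(u,u)-g(u,q^{2}u)\bigr)^{2}$ is exactly the bracket $\bigl((\alpha^{2}+\gamma^{2}-2\beta\delta)+(\beta^{2}+\delta^{2}-2\gamma\alpha)\bigr)^{2}$ of the paper, and your sign bookkeeping (vanishing of all gap-$2$ components, $R(e_{1},e_{2},e_{1},e_{4})=-\mu(\tfrac{\pi}{2})$) checks out. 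The genuine gap is the step you yourself flag at the end: you invoke ``the orthogonality $g(u,q^{2}u)=0$,'' but this is not among the hypotheses. Only the basis $\{x,qx,q^{2}x,q^{3}x\}$ is assumed orthonormal; $u$ is an arbitrary vector inducing a $q$-basis, and nothing forces $u\perp q^{2}u$ (take $u=2x+qx+q^{2}x$: it induces a $q$-basis, yet $g(u,q^{2}u)=4$). What your computation actually establishes is
\begin{equation*}
\mu(u,qu)=\frac{(1-\cos\theta)^{2}}{1-\cos^{2}\varphi}\,\mu\Bigl(\frac{\pi}{2}\Bigr),\qquad \theta=\angle(u,q^{2}u),
\end{equation*}
which reduces to \eqref{3.9} only when $\theta=\tfrac{\pi}{2}$.

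To be fair, the paper's own proof stalls at exactly the same point: its \eqref{tr2} is your identity, and the passage to \eqref{tr3} --- which multiplies the left-hand side by $(1-\cos\theta)^{2}$ while keeping the same right-hand side --- is not a legitimate deduction. The sectional curvature of $\{u,qu\}$ genuinely depends on $\theta$ as well as on $\varphi$, contrary to the remark preceding the proposition. So your argument is a correct, and cleaner, derivation of everything that actually follows from the stated hypotheses; but as a proof of \eqref{3.9} as written, the appeal to $g(u,q^{2}u)=0$ is an unjustified extra assumption. Either that orthogonality must be added to the hypotheses, or the factor $(1-\cos\theta)^{2}$ must appear in the conclusion.
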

\begin{proof}
Let $u=\alpha x+\beta qx +\gamma q^{2}x+\delta q^{3}x$, where $\alpha,\beta,\gamma, \delta \in \mathbb{R}$. Then $qu=\delta x+\alpha qx +\beta q^{2}x + \gamma q^{3}x$, $q^{2}u=\gamma x+\delta qx +\alpha q^{2}x +\beta q^{3}x$ and $q^{3}u=\beta x+\gamma qx +\delta q^{2}x + \alpha q^{3}x$.
We calculate
\begin{equation}\label{tr1}
\cos\varphi=\alpha \beta + \alpha\delta + \beta \gamma + \delta\gamma; \qquad
\cos\theta=2\alpha\gamma+2\beta\delta,
\end{equation}
where $\theta=\angle(u,q^{2}u)$.
Then using the linear properties of the curvature tensor $R$ and having in mind (\ref{3.5})--(\ref{3.7}), we obtain
\begin{equation}\label{tr2}
\begin{split}
R(u, qu, u, qu)&=\Big((\alpha^{2}+\gamma^{2}-2\beta\delta)^{2}+(\beta^{2}+\delta^{2}-2\gamma\alpha)^{2}\\&+2(\alpha^{2}+\gamma^{2}-2\beta\delta)(\beta^{2}+\delta^{2}-2\gamma\alpha)\Big)R(x, qx, x, qx).
\end{split}
\end{equation}
From \eqref{tr1} we get
\begin{equation}\label{tr3}
\begin{split}
(1-\cos\theta)^{2}R(u, qu, u, qu)&=\Big((\alpha^{2}+\gamma^{2}-2\beta\delta)^{2}+(\beta^{2}+\delta^{2}-2\gamma\alpha)^{2}\\&+2(\alpha^{2}+\gamma^{2}-2\beta\delta)(\beta^{2}+\delta^{2}-2\gamma\alpha)\Big)R(x, qx, x, qx)
\end{split}
\end{equation}
We substitute (\ref{tr2}) and \eqref{tr3} in (\ref{3.8}) and obtain (\ref{3.9}).
\end{proof}

\section{Parallelity of the circulant structure $q$}\label{4}

\begin{thm}
Let $\nabla$ be the Riemannian connection of $g$ of a manifold $(M, g, q)$. Then the structure $q$ is parallel with respect to the Riemannian connection $\nabla$ if and only if
\begin{equation}\label{f4}
    \grad A=(\grad C)q^{2}\ ,\quad 2\grad B=(\grad C)(q+q^{3}),
\end{equation}
where $\grad A$, $\grad B$ and $\grad C$ are gradients of the functions $A$, $B$ and $C$.
\end{thm}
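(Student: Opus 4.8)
The plan is to translate the condition $\nabla q=0$ into coordinate form and then exploit the circulant structure. Since the components $q_i^s$ in \eqref{f2} are constant, the vanishing of $\nabla q$ is equivalent to $q_i^m\Gamma_{km}^l=q_s^l\Gamma_{ki}^s$ for all indices, where the $\Gamma_{ki}^s$ are the Christoffel symbols of $g$. To avoid inverting the circulant matrix \eqref{f1}, I would pair the free index with $g$ and use Koszul's formula together with the isometry property \eqref{2.2} (so that $q^{-1}=q^3$, i.e. $g(qu,v)=g(u,q^3v)$); this rewrites the parallelity condition equivalently in the form
\[
q_i^m\,\Gamma_{km,j}=(q^3)_j^n\,\Gamma_{ki,n},\qquad \Gamma_{ab,c}=\tfrac12\big(\partial_a g_{bc}+\partial_b g_{ac}-\partial_c g_{ab}\big),
\]
for all $i,j,k$, an identity involving only the first derivatives of $g$ and the constant matrix $q$.

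Next I would insert the circulant data. The matrix \eqref{f2} shifts indices cyclically, $q\partial_i=\partial_{i+1}$ and $q^3\partial_i=\partial_{i-1}$ (indices taken modulo $4$), while $g_{lm}$ equals $A$, $B$ or $C$ according as $l-m\equiv 0$, $\pm1$ or $2 \pmod 4$. Writing $a_k=\partial_k A$, $b_k=\partial_k B$, $c_k=\partial_k C$, the displayed condition becomes a finite linear system in the $a_k,b_k,c_k$ indexed by the triples $(i,j,k)\in\{1,2,3,4\}^3$. A direct check shows that the componentwise reading of \eqref{f4} is exactly $a_1=c_3,\ a_2=c_4,\ a_3=c_1,\ a_4=c_2$ together with $2b_1=2b_3=c_2+c_4$ and $2b_2=2b_4=c_1+c_3$, so the theorem amounts to proving that this linear system is equivalent to those relations.

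To carry out the solution efficiently I would use the $\mathbb{Z}_4$ symmetry of the problem: the whole setup is invariant under the simultaneous cyclic shift of all indices, so it suffices to evaluate the system for one value of $i$ and to generate the remaining equations by shifting. For instance, suitable triples give $2b_1=c_2+c_4$ and $2b_3=c_2+c_4$ (hence $b_1=b_3$) together with $a_2+c_2=b_1+b_3$, which force $a_2=c_4$; the cyclic shift then yields the full list above, proving the ``only if'' direction. Conversely, substituting the relations \eqref{f4} back into the system shows that every triple is satisfied, which gives the ``if'' direction.

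The main obstacle is the bookkeeping of the linear system: although most of the $64$ triples are either trivial or redundant, one must select enough independent instances to force all of \eqref{f4} and, crucially, verify that no further differential constraints on $A,B,C$ are produced, so that \eqref{f4} is both necessary and sufficient. The symmetry of $\Gamma_{ab,c}$ in its first two indices and the cyclic invariance are what keep this reduction manageable.
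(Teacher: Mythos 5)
Your proposal is correct, but it reaches the linear system by a genuinely different computational route than the paper. The paper works with Christoffel symbols of the second kind: it computes the inverse circulant matrix $(g^{ij})$ explicitly (with cofactors $\overline{A},\overline{B},\overline{C}$ and determinant factor $D$), substitutes into $2\Gamma_{ij}^{s}=g^{as}(\partial_{i}g_{aj}+\partial_{j}g_{ai}-\partial_{a}g_{ij})$, and after a long computation extracts sixteen integer-coefficient linear equations in the partials $A_i,B_i,C_i$, which it then reduces to $A_1=C_3,\ A_2=C_4,\ A_3=C_1,\ A_4=C_2,\ B_1=B_3,\ B_2=B_4,\ 2B_1=C_2+C_4,\ 2B_2=C_1+C_3$, i.e.\ to \eqref{f4}; the converse is by back-substitution. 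You instead lower the free index of the condition $\Gamma_{ik}^{s}q^{k}_{j}=\Gamma_{ij}^{k}q^{s}_{k}$ against $g$ and use the isometry \eqref{2.2} (so $g(qu,v)=g(u,q^{3}v)$) to pass to Christoffel symbols of the first kind, $q_i^m\Gamma_{km,j}=(q^3)_j^n\Gamma_{ki,n}$; since $g$ is nondegenerate this is an equivalent system, and it is immediately linear in $A_i,B_i,C_i$ with constant coefficients, so you never need $(g^{ij})$. Your use of the $\mathbb{Z}_4$ cyclic symmetry to generate the full system from one slice is also a real economy over the paper's brute-force listing. Both arguments land on the same componentwise reading of \eqref{f4} and both handle the converse by substitution; what your version buys is the elimination of the inverse-metric computation and a structured way to see why only the stated relations (and no further constraints on $A,B,C$) arise. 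The one point you should still nail down in a full write-up is the explicit verification that enough triples $(i,j,k)$ have been checked to force all of \eqref{f4} and that the remaining triples add nothing new --- the paper does this by exhibiting the complete sixteen-equation system, and your symmetry argument needs an equally explicit (if shorter) closure step.
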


\begin{proof}
Let the structure $q$ be parallel with respect to the Riemannian connection $\nabla$ of a manifold $(M, g, q)$, i.e. $\nabla q=0$.
Let $\Gamma_{ij}^{s}$ be the Christoffel symbols of $\nabla$. If
$\nabla q=0$, then
\begin{equation}\label{f5}
\nabla_{i}q^{s}_{j}=\partial_{i}q^{s}_{j}+\Gamma_{ik}^{s}q^{k}_{j}-\Gamma_{ij}^{k}q^{s}_{k}=0.
\end{equation}
From (\ref{f2}) and (\ref{f5}) we get
\begin{equation}\label{f6}
\Gamma_{ik}^{s}q^{k}_{j}=\Gamma_{ij}^{k}q^{s}_{k}.
\end{equation}
We denote
\begin{equation}\label{31}
    A_{i}=\dfrac{\partial A}{\partial X^{i}}\ ,\quad B_{i}=\dfrac{\partial B}{\partial X^{i}}\ ,\quad C_{i}=\dfrac{\partial C}{\partial X^{i}}\ ,
\end{equation}
where $A$, $B$ and $C$ are the functions from (\ref{f1}).

We find the inverse matrix of $(g_{ij})$ as follows:
\begin{equation}\label{f3}
    (g^{ij})=\frac{1}{D}\begin{pmatrix}
      \overline{A} & \overline{B} & \overline{C} & \overline{B} \\
      \overline{B} & \overline{A} & \overline{B} & \overline{C}\\
      \overline{C} & \overline{B} & \overline{A} &  \overline{B} \\
      \overline{B} & \overline{C} & \overline{B} & \overline{A}\\
    \end{pmatrix},\qquad D=(A-C)((A+C)^{2}-4B^{2}),
\end{equation}
where $\overline{A}=A(A+C)-2B^{2}$, $\overline{B}=B(C-A)$,
$\overline{C}=2B^{2}-C(A+C)$.

Using (\ref{f1}), (\ref{f2}), (\ref{f6})--(\ref{f3})
and the well known identities
\begin{equation}\label{2.3}
2\Gamma_{ij}^{s}=g^{as}(\partial_{i}g_{aj}+\partial_{j}g_{ai}-\partial_{a}g_{ij}),
\end{equation}
 after a long computation we get the following system:
 \begin{align*}
  &A_{4}-B_{1}+B_{3}-C_{2}=0,\\ &A_{4}+B_{1}-B_{3}-C_{2}=0,\\
 &2A_{2}+A_{4}-3B_{1}-B_{3}+C_{2}=0,\\
&A_{3}+B_{2}-B_{4}-C_{1}=0,\\
 &A_{3}-B_{2}+B_{4}-C_{1}=0,\\ &A_{2}-B_{1}+B_{3}-C_{4}=0,\\
 &A_{2}+B_{1}-B_{3}-C_{4}=0,\\ &A_{4}-B_{1}+3B_{3}+C_{2}+2C_{4}=0,\\ &A_{2}+2A_{4}-3B_{1}-B_{3}+C_{4}=0,\\
&A_{2}+2A_{4}-B_{1}-3B_{3}+C_{4}=0,\\ &A_{1}+2A_{3}-3B_{2}-B_{4}+C_{3}=0,\\ &A_{1}-B_{2}+B_{4}-C_{3}=0,\\
 &A_{3}-B_{2}-3B_{4}+C_{1}+2C_{3}=0,\\ &A_{1}-B_{2}-3B_{4}+2C_{1}+C_{3}=0,\\ &2A_{1}+A_{3}-B_{2}-3B_{4}+C_{1}=0,\\
 &A_{2}-B_{1}-3B_{3}+2C_{2}+C_{4}=0.
\end{align*}
The last system implies
 \begin{align}\label{system2}\nonumber
   &A_{1}=C_{3},\  A_{2}=C_{4},\  A_{3}=C_{1},\  A_{4}=C_{2},\  B_{1}=B_{3},\\&B_{2}=B_{4},\  2B_{1}=C_{4}+C_{2},\ 2B_{2}=C_{1}+C_{3}.
\end{align}
 Then we obtain that (\ref{f4}) is valid.

Inversely, let (\ref{f4}) be valid. We can verify that (\ref{system2})
is valid, too. The identities (\ref{system2}) imply (\ref{f6}) and
consequently (\ref{f5}) is true. So $\nabla q=0$.
\end{proof}

\begin{prop} Let $(M, g, q)$ be a manifold with parallel structure $q$ with respect of $g$. Then $(M, g, q)$ is a manifold with property \eqref{3.1}.
\end{prop}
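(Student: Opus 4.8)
The plan is to use the classical fact that a parallel $(1,1)$-tensor field commutes with the curvature operator, combined with the isometry property of $q$ already established in \eqref{2.2}. First I would pass from the $(0,4)$ tensor $R$ to the associated $(1,3)$ curvature operator, writing $\mathcal{R}(x,y)z=\nabla_x\nabla_y z-\nabla_y\nabla_x z-\nabla_{[x,y]}z$, so that $R(x,y,z,u)=g\big(\mathcal{R}(x,y)z,u\big)$. The whole proof then reduces to showing $\mathcal{R}(x,y)(qz)=q\,\mathcal{R}(x,y)z$ and transferring this to the $(0,4)$ tensor via the metric.

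The key step is the commutation relation. By hypothesis $\nabla q=0$, which unwound means $\nabla_x(qz)=q(\nabla_x z)$ for all vector fields $x,z$. Substituting $qz$ into the definition of $\mathcal{R}$ and pulling $q$ through each covariant derivative, I would compute
$$\mathcal{R}(x,y)(qz)=\nabla_x\nabla_y(qz)-\nabla_y\nabla_x(qz)-\nabla_{[x,y]}(qz)=q\big(\nabla_x\nabla_y z-\nabla_y\nabla_x z-\nabla_{[x,y]}z\big)=q\,\mathcal{R}(x,y)z.$$
This is the only computation in the argument, and it is entirely routine: each term picks up exactly one factor of $q$ that can be moved to the front.

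Finally I would combine this with the fact, proved in the Theorem of Section~\ref{1}, that $q$ is a $g$-isometry, i.e. $g(qa,qb)=g(a,b)$. Applying it in the last slot pair gives
$$R(x,y,qz,qu)=g\big(\mathcal{R}(x,y)(qz),qu\big)=g\big(q\,\mathcal{R}(x,y)z,\,qu\big)=g\big(\mathcal{R}(x,y)z,u\big)=R(x,y,z,u),$$
which is precisely \eqref{3.1}. I expect no serious obstacle here; the crux is recognizing that the two needed ingredients—commutation of $q$ with $\mathcal{R}$ from parallelism, and the isometry property \eqref{2.2}—together deliver the identity immediately. The only point requiring care is the bookkeeping of slots: the isometry relation must be applied to the pair of arguments into which $q$ was inserted, and the sign/index convention for $R$ of type $(0,4)$ must be fixed consistently with \eqref{3.8}.
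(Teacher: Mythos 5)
Your proposal is correct and is essentially the paper's own argument: the paper's ``integrability condition'' $R^{a}_{jkl}q_{a}^{s}=R^{s}_{akl}q_{j}^{a}$ is exactly your commutation relation $\mathcal{R}(x,y)\circ q=q\circ\mathcal{R}(x,y)$ written in index form, and its identification of the lowered-index coordinates of $q$ with those of $q^{3}$ is the isometry property \eqref{2.2} that you invoke. The only cosmetic difference is that the paper derives the identity on the first pair of slots and you on the last, which are interchangeable by the pair symmetry of $R$.
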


\begin{proof}
The condition $\nabla q=0$ implies $\nabla_{i}q_{s}^{j}=0$. The integrability condition of this system is
\begin{equation}\label{f7}
R^{a}_{jkl}q_{a}^{s}=R^{s}_{akl}q_{j}^{a},
\end{equation}
where $R^{a}_{jkl}$ are the local  coordinates of $R$.
From (\ref{f7}) we find
\begin{equation}\label{f8}
R_{ajkl}q^{a}_{.s}=R_{sakl}q^{a}_{j.}.
\end{equation}
We get $q^{a}_{.s}$ are the local coordinates of $q^{3}$. So (\ref{f8}) implies
$$R(q^{3}u,v,w,t)=R(u,qv,w,t)$$
from which (\ref{3.1}) follows. Then $(M, g, q)$ has the property \eqref{3.1}.
\end{proof}

\vspace{6pt}
\author{Dimitar Razpopov\\ Department of Mathematics and Physics\\ Agricultural University of Plovdiv\\
Bulgaria 4000\\
e-mail:dimitrerazpopov@hotmail.com}
\end{document}